\newfont{\bcb}{msbm10}
\newfont{\matb}{cmbx10}
\newfont{\got}{eufm10}
\newtheorem{theorem}{Theorem}[section]
\newtheorem{lemma}[theorem]{Lemma}
\newtheorem{proposition}[theorem]{Proposition}
\newtheorem{corollary}[theorem]{Corollary}
\theoremstyle{definition}
\theoremstyle{remark}
\newtheorem{remark}[theorem]{Remark}
\numberwithin{equation}{section}
\begin{document}

\title[The closedness theorem]{The closedness theorem \\ over Henselian valued fields}

\author[Krzysztof Jan Nowak]{Krzysztof Jan Nowak}

%\footnotetext{Research partially supported by by KBN Grant No.\
%1P03A 00527.}

\subjclass[2000]{12J15, 14G27, 03C10.}

\keywords{Closedness theorem, fiber shrinking, Henselian valued
fields, relative quantifier elimination for abelian ordered
groups}

%\date{}

\begin{abstract}
We prove the closedness theorem over Henselian valued fields,
which was established over rank one valued fields in one of our
recent papers. In the proof, as before, we use the local behaviour
of definable functions of one variable and the so-called fiber
shrinking, which is a relaxed version of curve selection. Now our
approach applies also relative quantifier elimination for ordered
abelian groups due to Cluckers--Halupczok. Afterwards the
closedness theorem will allow us to achieve i.a.\ the
\L{}ojasiewicz inequality, curve selection and extending
hereditarily rational functions as well as to develop the theory
of regulous functions and sheaves.
\end{abstract}

\maketitle

\section{Introduction}

Throughout the paper, $K$ will be an arbitrary Henselian valued
field of equicharacteristic zero with valuation $v$, value group
$\Gamma$, valuation ring $R$ and residue field $\Bbbk$. Examples
of such fields are the quotient fields of the rings of formal
power series and of Puiseux series with coefficients from a field
$\Bbbk$ of characteristic zero as well as the fields of Hahn
series (maximally complete valued fields also called
Malcev--Neumann fields; cf.~\cite{Kap}):
$$ \Bbbk((t^{\Gamma})) := \left\{ f(t) = \sum_{\gamma \in \Gamma} \
   a_{\gamma}t^{\gamma} : \ a_{\gamma} \in \Bbbk, \ \text{supp}\,
   f(t) \ \text{is well ordered} \right\}. $$
We consider the ground field $K$ along with the three-sorted
language $\mathcal{L}$ of Denef--Pas (cf.~\cite{Pa1,Now2}). Every
valued field $K$ has a topology induced by its valuation $v$.
Cartesian products $K^{n}$ are equipped with the product topology
and subsets of Cartesian products $K^{n}$ inherit a topology,
called the $K$-topology. The main purpose of this paper is to
prove the following closedness theorem.

\begin{theorem}\label{clo-th}
Let $D$ be an $\mathcal{L}$-definable subset of $K^{n}$. Then the
canonical projection
$$ \pi: D \times R^{m} \longrightarrow D  $$
is definably closed in the $K$-topology, i.e.\ if $B \subset D
\times R^{m}$ is an $\mathcal{L}$-definable closed subset, so is
its image $\pi(B) \subset D$.
\end{theorem}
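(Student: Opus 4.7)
The plan is to reduce to the case $m=1$ and then combine fiber shrinking with the local analysis of $\mathcal{L}$-definable functions of one variable. Since a composition of definably closed projections is definably closed, the map $\pi$ factors as the iteration of projections $D \times R^{k} \to D \times R^{k-1}$ for $k = m, m-1, \ldots, 1$, each peeling off one $R$-factor. By induction on $m$ applied to the base $D \times R^{k-1}$ in the role of $D$, it therefore suffices to treat the case $m=1$.

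For $m = 1$, let $B \subset D \times R$ be closed and $\mathcal{L}$-definable, and pick $a \in D$ lying in the closure of $\pi(B)$; the goal is to produce $b \in R$ with $(a,b) \in B$. I would first invoke fiber shrinking, a \emph{relaxed curve selection} producing an $\mathcal{L}$-definable subset $A \subset B$ whose projection still accumulates at $a$ but along which the $R$-coordinate is controlled by a one-dimensional definable family of balls in $R$ of shrinking valuative radius. The local structure of $\mathcal{L}$-definable functions of one variable into $R$, delivered by Denef--Pas cell decomposition, then yields a well-defined limit $b \in R$ of this $R$-coordinate as one approaches $a$ along $A$. Because $B$ is closed, $(a, b) \in B$, as required.

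The main obstacle is to establish fiber shrinking uniformly over Henselian valued fields of arbitrary rank. In the earlier paper this was done in the rank one case by embedding $\Gamma \hookrightarrow \matR$ and exploiting the archimedean structure of real-valued valuations; the shrinking of radii then rests on convergence in $\matR$. For a general value group $\Gamma$, the analogous statements about valuative neighbourhoods and accumulation live entirely in the ordered abelian group sort, and the key new technical input is the relative quantifier elimination for ordered abelian groups of Cluckers--Halupczok. This allows one to manipulate conditions on $\Gamma$ in a manner uniform in parameters from the valued field and residue field sorts, preserving the $\mathcal{L}$-definability of every set produced by the shrinking construction and ensuring that the shrinking of radii can still be expressed and exploited. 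With fiber shrinking established in this generality, the two-step reduction above completes the proof.
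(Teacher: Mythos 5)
Your reduction to $m=1$ is fine and agrees with the paper, but after that the argument has a genuine gap, and your use of fiber shrinking is not the one that actually works. Proposition~\ref{FS} requires an accumulation point of the set it is applied to; for $B\subset D\times R$ such a point would be some $(a,b)$ lying over $a$, which is exactly what you are trying to produce, so you cannot ``shrink'' $B$ itself. What the paper does is apply fiber shrinking in the base variables, to the first $n$ coordinates at the point $a$, which after a permutation of the $x$-coordinates reduces the theorem to the case $n=1$, $a=0$; it gives no control whatsoever over the $R$-coordinate $y$. Your claims that the shrinking controls the $R$-coordinate by ``balls of shrinking valuative radius'' and that cell decomposition then ``yields a well-defined limit $b\in R$ of this $R$-coordinate'' are unjustified and in general false: over each base point the fiber of $B$ in $R$ is a definable set cut out by valuation inequalities, angular-component and congruence conditions (a cell fiber $F(\xi')$), and these fibers need neither shrink to a point nor converge. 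The entire difficulty of the theorem is to find a single value $w\in R$ lying in the fibers over base points arbitrarily close to $0$.

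That missing one-variable case is where the paper's real work lies. One prepares $B$ by Pas cell decomposition combined with Cluckers--Halupczok relative quantifier elimination, reduces to a single fiber $F(\xi')$ over a definable set $E\subset K$ accumulating at $0$, and invokes the existence of limits for the finitely many one-variable definable functions $c,a,b,a_{i},b_{i},f_{i}$ occurring in the cell description (\cite[Proposition~5.1]{Now5}) --- not a limit of the $y$-coordinate. After disposing of the easy cases where $a(0)=0$ or some $a_{i}(0)=0$ (there $F(\xi')$ itself contains a fiber shrinking through $(0,0)$), one takes any point $(u,w)\in F(\xi')$ with $v(u)$ large and shows that $(0,w)$ is an accumulation point of $F(\xi')$: relative quantifier elimination describes $v(E)$ near infinity by finitely many congruence conditions, so there exist $x\in E$ arbitrarily close to $0$ with $v(x)\in v(u)+qMN\cdot\Gamma$; the affine-line description of $(v(x),v(f_{i}(x)))$ then forces $v(f_{i}(x))\equiv_{M}v(f_{i}(u))$, whence $(x,w)\in F(\xi')$, and closedness of $B$ gives $(0,w)\in B$. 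Note also that Cluckers--Halupczok elimination is needed not only for fiber shrinking via Lemma~\ref{line}, as you suggest, but again in this one-variable analysis; your proposal stops precisely where that work begins.
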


In the case where the ground field $K$ is of rank one, it was
established in our paper~\cite{Now2}. Of course, when $K$ is a
locally compact field, the closedness theorem holds by a routine
topological argument. In the proof given in Section~4, we use, as
before, the local behaviour of definable functions of one variable
and the so-called fiber shrinking, which is a relaxed version of
curve selection. The former result over arbitrary Henselian valued
fields was achived in the paper~\cite[Proposition~5.1]{Now5}. The
proof of the latter, in turn, is given in Section~2. Now our
approach applies also relative quantifier elimination for ordered
abelian groups due to Cluckers--Halupczok~\cite{C-H}, which is
recalled in Section~3.

\begin{remark}
Not all valued fields $K$ have an angular component map, but it
exists if $K$ has a cross section, which happens whenever $K$ is
$\aleph_{1}$-saturated (cf.~\cite[Chap.~II]{Ch}). Moreover, a
valued field $K$ has an angular component map whenever its residue
field $\Bbbk$ is $\aleph_{1}$-saturated
(cf.~\cite[Corollary~1.6]{Pa2}). In general, unlike for $p$-adic
fields and their finite extensions, adding an angular component
map does strengthen the family of definable sets. The $K$-topology
is, of course, definable in the language of valued fields, and
therefore the closedness theorem is a first order property. Hence
it is valid over arbitrary Henselian valued fields of
equicharacteristic zero, because it can be proven using saturated
elementary extensions and one may thus assume that an angular
component map exists.
\end{remark}

Afterwards Theorem~\ref{clo-th} will allow us to establish i.a.\
the \L{}ojasiewicz inequality, curve selection and extending
hereditarily rational functions as well as to develop the theory
of regulous functions and sheaves. Those results were achieved
over rank one valued fields in our paper~\cite{Now2}. The theory
of hereditarily rational functions on the real and $p$-adic
varieties was provided in the paper~\cite{K-N}. The closedness
theorem immediately yields five corollaries stated below. One of
them, the descent property (Corollary~\ref{clo-th-cor-4}), enables
application of resolution of singularities and transformation to a
normal crossing by blowing up in much the same way as over the
locally compact ground field. Other its applications are provided
in our recent papers~\cite{Now3,Now4}.

\begin{corollary}\label{clo-th-cor-1}
Let $D$ be an $\mathcal{L}$-definable subset of $K^{n}$ and
$\,\mathbb{P}^{m}(K)$ stand for the projective space of dimension
$m$ over $K$. Then the canonical projection
$$ \pi: D \times \mathbb{P}^{m}(K) \longrightarrow D $$
is definably closed. \hspace*{\fill} $\Box$
\end{corollary}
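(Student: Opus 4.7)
\medskip

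\noindent\textbf{Proof plan.} The plan is to reduce the statement directly to Theorem~\ref{clo-th} by covering $\mathbb{P}^{m}(K)$ with finitely many $\mathcal{L}$-definable, $K$-closed pieces, each $\mathcal{L}$-definably homeomorphic to the closed polydisc $R^{m}$. For each $i=0,\ldots,m$ I set
$$ V_{i} := \{ [x_{0}:\ldots:x_{m}] \in \mathbb{P}^{m}(K) : v(x_{i}) \leq v(x_{j}) \text{ for all } j \}, $$
so that the $V_{i}$ cover $\mathbb{P}^{m}(K)$ (every projective point has a coordinate of minimal valuation) and each $V_{i}$ lies inside the standard affine chart $U_{i} = \{ x_{i} \neq 0 \}$. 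The dehomogenization
$$ \phi_{i} : R^{m} \longrightarrow V_{i}, \qquad (y_{1},\ldots,y_{m}) \longmapsto [y_{1}:\ldots:y_{i-1}:1:y_{i}:\ldots:y_{m}], $$
is an $\mathcal{L}$-definable bijection, and under the standard homeomorphism $U_{i} \cong K^{m}$ the subset $V_{i}$ corresponds exactly to $R^{m}$, so $\phi_{i}$ is a homeomorphism onto $V_{i}$ with the subspace topology.

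Given an $\mathcal{L}$-definable closed set $B \subset D \times \mathbb{P}^{m}(K)$, I would decompose $B = \bigcup_{i=0}^{m} B_{i}$ with $B_{i} := B \cap (D \times V_{i})$, each closed in $D \times V_{i}$, and transport it via $\mathrm{id}_{D} \times \phi_{i}^{-1}$ to an $\mathcal{L}$-definable closed subset $\widetilde{B}_{i} \subset D \times R^{m}$. By Theorem~\ref{clo-th} the projection $\widetilde{B}_{i} \to D$ has closed image, which equals $\pi(B_{i})$. Hence
$$ \pi(B) = \bigcup_{i=0}^{m} \pi(B_{i}) $$
is a finite union of closed subsets of $D$, and therefore closed.

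The only point that calls for genuine verification, and in my view the main (minor) obstacle, is the closedness of $V_{i}$ in $\mathbb{P}^{m}(K)$. I would argue this by first checking that $\{ (a,b) \in K^{2} : v(a) \leq v(b) \}$ is closed in the $K$-topology: if $(a_{n},b_{n}) \to (a,b)$ with $v(a_{n}) \leq v(b_{n})$, then when $a \neq 0$ the valuation $v(a_{n})$ is eventually equal to $v(a)$ and similarly for $b_{n}$ when $b \neq 0$, while if $a = 0$ then $v(a_{n}) \to \infty$ forces $v(b_{n}) \to \infty$, i.e.\ $b = 0$. The preimage of $V_{i}$ in $K^{m+1} \setminus \{0\}$ is then a finite intersection of such closed sets, so $V_{i}$ is closed in $\mathbb{P}^{m}(K)$ for the quotient topology. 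Beyond this, the argument is pure bookkeeping: Theorem~\ref{clo-th} carries all the weight, and this corollary is a routine transfer via the standard valuation-minimum cover of projective space.
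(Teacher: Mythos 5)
Your argument is correct and is essentially the derivation the paper intends: the corollary is stated there without proof as an immediate consequence of Theorem~\ref{clo-th}, obtained exactly via the cover of $\mathbb{P}^{m}(K)$ by the definable ``minimal valuation'' charts $V_{i}\cong R^{m}$ and a chart-by-chart application of the theorem. The one remark worth making is that the closedness of $V_{i}$ in $\mathbb{P}^{m}(K)$, which you single out as the main point needing verification, is not actually required: $B\cap(D\times V_{i})$ is closed in $D\times V_{i}$ simply as the trace of the closed set $B$, and since $\phi_{i}$ is a homeomorphism onto $V_{i}$ with its subspace topology, this already makes $\widetilde{B}_{i}$ closed in $D\times R^{m}$, which is all Theorem~\ref{clo-th} needs.
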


\begin{corollary}\label{clo-th-cor-0}
Let $A$ be a closed $\mathcal{L}$-definable subset of
$\,\mathbb{P}^{m}(K)$ or $R^{m}$. Then every continuous
$\mathcal{L}$-definable map $f: A \to K^{n}$ is definably closed
in the $K$-topology.
\end{corollary}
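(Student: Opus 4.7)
The plan is to reduce both cases to a projection statement supplied by the closedness theorem, using the standard graph trick. Observe first that the $K$-topology on $K^{n}$ is Hausdorff (distinct points are separated by disjoint valuation balls of small enough radius), so that the graph of any continuous map into $K^{n}$ is closed in the product topology.

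Concretely, let $E$ denote the ambient space, so that $E = R^{m}$ in the first case and $E = \mathbb{P}^{m}(K)$ in the second, and let $F \subseteq A$ be an $\mathcal{L}$-definable subset that is closed in $A$. Since $A$ itself is closed in $E$, $F$ is closed in $E$ as well. I would then form the (swapped) graph
$$ \Gamma \; := \; \{ (y,x) \in K^{n} \times E : x \in F,\ y = f(x) \}, $$
which is $\mathcal{L}$-definable, and which, by continuity of $f|_{F}$ together with the Hausdorffness of $K^{n}$, is closed in $K^{n} \times F$ and hence in $K^{n} \times E$.

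Next I would apply Theorem~\ref{clo-th} (with $D = K^{n}$) in the case $E = R^{m}$, or Corollary~\ref{clo-th-cor-1} (with $D = K^{n}$) in the case $E = \mathbb{P}^{m}(K)$, to conclude that the canonical projection $\pi : K^{n} \times E \to K^{n}$ is definably closed. The image $\pi(\Gamma)$ is precisely $f(F)$, which is therefore closed in $K^{n}$. Since $F$ was an arbitrary closed $\mathcal{L}$-definable subset of $A$, this shows that $f$ is definably closed in the $K$-topology.

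The proof is essentially immediate once Theorem~\ref{clo-th} and Corollary~\ref{clo-th-cor-1} are available; the real content sits in those results, and the only additional point one has to observe is that a continuous $\mathcal{L}$-definable map into the Hausdorff space $K^{n}$ has closed graph, which is automatic. There is thus no genuine obstacle beyond invoking the closedness theorem in the appropriate form.
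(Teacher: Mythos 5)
Your proof is correct and follows exactly the route the paper intends: the paper states this corollary as an immediate consequence of Theorem~\ref{clo-th} (and Corollary~\ref{clo-th-cor-1}), and your graph argument --- closedness of the graph of $f|_{F}$ via continuity and Hausdorffness of $K^{n}$, closedness of $F$ in the ambient space because $A$ is closed, then projection onto $K^{n}$ --- is precisely that derivation. The only (harmless) slip is that you swapped which case is ``first'' and which is ``second'' relative to the statement, but you apply Theorem~\ref{clo-th} to $R^{m}$ and Corollary~\ref{clo-th-cor-1} to $\mathbb{P}^{m}(K)$ correctly.
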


%\vspace{2ex}

\begin{corollary}\label{clo-th-cor-2}
Let $\phi_{i}$, $i=0,\ldots,m$, be regular functions on $K^{n}$,
$D$ be an $\mathcal{L}$-definable subset of $K^{n}$ and $\sigma: Y
\longrightarrow K\mathbb{A}^{n}$ the blow-up of the affine space
$K\mathbb{A}^{n}$ with respect to the ideal
$(\phi_{0},\ldots,\phi_{m})$. Then the restriction
$$ \sigma: Y(K) \cap \sigma^{-1}(D) \longrightarrow D $$
is a definably closed quotient map.
\end{corollary}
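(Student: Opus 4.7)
The plan is to realize $\sigma$ as the restriction, to a closed subset, of the canonical projection $D \times \mathbb{P}^m(K) \to D$, and then invoke Corollary~\ref{clo-th-cor-1}. Concretely, I would use the standard realization of the blow-up: $Y$ embeds into $K\mathbb{A}^n \times \mathbb{P}^m_K$ as the closed subvariety cut out by the bihomogeneous relations $\phi_i(x) y_j - \phi_j(x) y_i = 0$ for $0 \le i,j \le m$, and under this embedding $\sigma$ is nothing but the restriction of the first projection. Since the defining equations are polynomial and hence continuous in the $K$-topology, $Y(K) \cap \sigma^{-1}(D)$ is an $\mathcal{L}$-definable subset of $D \times \mathbb{P}^m(K)$ that is closed in the relative topology.

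For definable closedness, suppose $B \subset Y(K) \cap \sigma^{-1}(D)$ is closed and $\mathcal{L}$-definable in the subspace topology. Because $Y(K) \cap \sigma^{-1}(D)$ is itself closed in $D \times \mathbb{P}^m(K)$, the set $B$ is also closed in $D \times \mathbb{P}^m(K)$. Corollary~\ref{clo-th-cor-1} applied to the projection $\pi: D \times \mathbb{P}^m(K) \to D$ then immediately yields that $\sigma(B) = \pi(B)$ is closed in $D$.

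To upgrade definable closedness to the definable quotient property, I would verify continuity and surjectivity of $\sigma$ on $K$-points. Continuity is immediate. For surjectivity, off the base locus $V(\phi_0,\ldots,\phi_m)$ the map $\sigma$ is an isomorphism with inverse $x \mapsto (x,[\phi_0(x):\cdots:\phi_m(x)])$, while over any point $x$ of the base locus the entire projective space $\{x\} \times \mathbb{P}^m(K)$ lies in the fibre and certainly contains $K$-rational points. A continuous, surjective, definably closed $\mathcal{L}$-definable map is automatically a definable quotient map (a subset $U \subset D$ with $\sigma^{-1}(U)$ open has open complement $D \setminus U = \sigma(\sigma^{-1}(D \setminus U))$).

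The only substantive point I expect to require care is the surjectivity on $K$-rational points, which in general can fail for morphisms of varieties over a non-algebraically-closed field but succeeds here because the fibres over the base locus are full projective spaces. Once this is in place, the corollary reduces to a purely formal consequence of Corollary~\ref{clo-th-cor-1}.
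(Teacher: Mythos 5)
Your argument is essentially the paper's own proof: the paper disposes of the corollary in one line by regarding $Y(K)$ as a closed algebraic subvariety of $K^{n}\times\mathbb{P}^{m}(K)$ and $\sigma$ as the canonical projection, so that Corollary~\ref{clo-th-cor-1} applies; your reduction to that corollary, together with the remark that a continuous, surjective, definably closed map is a definable quotient map, is exactly this route.

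Two of your algebro-geometric justifications are inaccurate, though they do not touch the part of the argument that actually carries the proof. The blow-up of the ideal $(\phi_{0},\ldots,\phi_{m})$ is the Zariski closure in $K\mathbb{A}^{n}\times\mathbb{P}^{m}$ of the graph of $x\mapsto(\phi_{0}(x):\cdots:\phi_{m}(x))$ over the complement of the base locus; this closure is contained in, but in general strictly smaller than, the locus cut out by $\phi_{i}(x)y_{j}-\phi_{j}(x)y_{i}=0$ (take $(\phi_{0},\phi_{1},\phi_{2})=(x^{2},xy,y^{2})$ on $K^{2}$: the determinantal locus has fibre $\mathbb{P}^{2}$ over the origin, while the exceptional fibre of the blow-up is only a conic). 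For the same reason, your claim that over a point of the base locus the whole of $\{x\}\times\mathbb{P}^{m}(K)$ lies in the fibre is false: the exceptional fibre is the projectivized normal cone, which can be a proper subvariety of $\mathbb{P}^{m}$, so your justification of surjectivity on $K$-points does not stand as written (existence of $K$-rational points in such fibres is a real issue, which the paper itself also passes over in silence when asserting the quotient property). What does make the argument work is simply that $Y$ is by construction a Zariski-closed subvariety of $K\mathbb{A}^{n}\times\mathbb{P}^{m}$, hence $Y(K)\cap\sigma^{-1}(D)$ is an $\mathcal{L}$-definable subset of $D\times\mathbb{P}^{m}(K)$, closed in the $K$-topology, and $\sigma$ is the restriction of the canonical projection; definable closedness then follows from Corollary~\ref{clo-th-cor-1} exactly as you say.
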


%\vspace{2ex}

\begin{proof} Indeed,  $Y(K)$ can be regarded as a closed algebraic subvariety of
$K^{n} \times \mathbb{P}^{m}(K)$ and $\sigma$ as the canonical
projection.
\end{proof}

\begin{corollary}\label{clo-th-cor-3}
Let $X$ be a smooth $K$-variety, $\phi_{i}$, $i=0,\ldots,m$,
regular functions on $X$, $D$ be an $\mathcal{L}$-definable subset
of $X(K)$ and $\sigma: Y \longrightarrow X$ the blow-up of the
ideal $(\phi_{0},\ldots,\phi_{m})$. Then the restriction
$$ \sigma: Y(K) \cap \sigma^{-1}(D) \longrightarrow D $$
is a definably closed quotient map.  \hspace*{\fill} $\Box$
\end{corollary}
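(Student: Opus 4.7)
The plan is to deduce the corollary from Corollary~\ref{clo-th-cor-2} by localizing on the base $X$. Since definable closedness of a map is a local property on the target in the $K$-topology, it suffices to verify the statement over each member of a suitable open cover of~$D$.

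First, cover $X$ by finitely many Zariski-open affine subvarieties $X_{1}, \ldots, X_{r}$. Zariski-open subsets are open in the $K$-topology, so the sets $D_{i} := D \cap X_{i}(K)$ form an open cover of $D$. Over each $X_{i}$, the restriction
$$ \sigma_{i} := \sigma|_{\sigma^{-1}(X_{i})} : \sigma^{-1}(X_{i}) \longrightarrow X_{i} $$
is itself the blow-up of $X_{i}$ at the restricted ideal $(\phi_{0}|_{X_{i}}, \ldots, \phi_{m}|_{X_{i}})$, by the standard local nature of blow-ups.

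Second, since $X_{i}$ is affine, I would embed it as a closed algebraic subvariety of some $K^{n_{i}}$; the generators $\phi_{j}|_{X_{i}}$ lift to polynomials $\tilde{\phi}_{j}$ on $K^{n_{i}}$, and consequently $\sigma^{-1}(X_{i})(K)$ appears as a closed algebraic subvariety of $K^{n_{i}} \times \mathbb{P}^{m}(K)$, with $\sigma_{i}$ corresponding to the first-coordinate projection. Corollary~\ref{clo-th-cor-1} then yields that $\sigma_{i} : \sigma^{-1}(X_{i})(K) \cap \sigma^{-1}(D_{i}) \to D_{i}$ is definably closed --- this is exactly the argument already used to prove Corollary~\ref{clo-th-cor-2}.

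Finally, given a closed $\mathcal{L}$-definable subset $B \subset Y(K) \cap \sigma^{-1}(D)$, one has $\sigma(B) \cap D_{i} = \sigma_{i}(B \cap \sigma^{-1}(D_{i}))$, and $B \cap \sigma^{-1}(D_{i})$ is closed in $\sigma^{-1}(X_{i})(K) \cap \sigma^{-1}(D_{i})$. Hence each $\sigma(B) \cap D_{i}$ is closed in $D_{i}$, and since $\{D_{i}\}_{i=1}^{r}$ is an open cover, $\sigma(B)$ is closed in $D$. The quotient-map assertion then follows from definable closedness combined with surjectivity of $\sigma$ onto $D$, the latter being standard from the blow-up presentation. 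I do not foresee any genuine obstacle in this approach; the only point meriting a little care is the affine-local description of the blow-up, which reduces the global statement to the already established affine case.
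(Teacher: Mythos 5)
Your proposal is correct and is essentially the argument the paper intends: Corollary~\ref{clo-th-cor-3} is left without a written proof precisely because it reduces to Corollary~\ref{clo-th-cor-1} (via the proof of Corollary~\ref{clo-th-cor-2}) by covering $X$ with finitely many affine charts, over which the blow-up sits as a closed subvariety of $K^{n_i}\times\mathbb{P}^{m}(K)$ with $\sigma$ the coordinate projection, and by the fact that closedness of the image can be checked on an open cover of the target. Your handling of the quotient-map clause (surjectivity of $\sigma$ on $K$-points) is asserted rather than argued, but this is exactly the level of detail the paper itself adopts in Corollaries~\ref{clo-th-cor-2} and~\ref{clo-th-cor-3}.
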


%\vspace{2ex}

\begin{corollary}\label{clo-th-cor-4} (Descent property)
Under the assumptions of the above corollary, every continuous
$\mathcal{L}$-definable function
$$ g: Y(K) \cap \sigma^{-1}(D) \longrightarrow K $$
that is constant on the fibers of the blow-up $\sigma$ descends to
a (unique) continuous $\mathcal{L}$-definable function $f: D
\longrightarrow K$. \hspace*{\fill} $\Box$
\end{corollary}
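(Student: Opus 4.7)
The plan is to deduce the descent property directly from Corollary~\ref{clo-th-cor-3}, which guarantees that the restriction
$$ \sigma: Y(K) \cap \sigma^{-1}(D) \longrightarrow D $$
is a definably closed quotient map onto $D$. First, since $g$ is constant on each fiber $\sigma^{-1}(x)$ with $x \in D$, one obtains a unique set-theoretic function $f: D \to K$ satisfying $g = f \circ \sigma$; uniqueness of $f$ as a continuous $\mathcal{L}$-definable lift is then automatic.

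Next I would verify definability of $f$: its graph is the image of the graph of $g$ under the $\mathcal{L}$-definable map $(y,t) \mapsto (\sigma(y), t)$, hence is itself $\mathcal{L}$-definable.

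The heart of the argument is continuity. The $K$-topology on $K$ admits a basis of $\mathcal{L}$-definable closed (indeed clopen) valuation balls, so it suffices to show that $f^{-1}(C) \subset D$ is closed in the $K$-topology for every closed $\mathcal{L}$-definable subset $C$ of $K$. Since $g$ is continuous and $\mathcal{L}$-definable, the preimage $g^{-1}(C)$ is a closed $\mathcal{L}$-definable subset of $Y(K) \cap \sigma^{-1}(D)$. From $g = f \circ \sigma$ together with surjectivity of $\sigma$ onto $D$ one reads off
$$ f^{-1}(C) \;=\; \sigma\left( g^{-1}(C) \right), $$
and Corollary~\ref{clo-th-cor-3} tells us that this image is closed in $D$. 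Hence $f$ is continuous.

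Essentially everything hinges on the closed-map property of $\sigma$ restricted to $Y(K) \cap \sigma^{-1}(D)$, which has already been supplied by Corollary~\ref{clo-th-cor-3}. The only mildly delicate point I anticipate is translating this closed-map property into genuine continuity of $f$; this is the standard quotient-topology argument and it goes through cleanly here because the target field $K$ has a definable clopen neighborhood basis at every point. No new input beyond the preceding corollary should be needed.
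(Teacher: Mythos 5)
Your argument is correct and is exactly the intended (omitted) proof: the paper marks this corollary as an immediate consequence of Corollary~\ref{clo-th-cor-3}, namely the standard quotient-map deduction that $f$ is well defined and definable, with continuity read off from $f^{-1}(C)=\sigma\left(g^{-1}(C)\right)$ and the definably closed property, using the definable clopen balls of the $K$-topology. Nothing beyond Corollary~\ref{clo-th-cor-3} is needed, just as you say.
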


\section{Fiber shrinking}

Consider a Henselian valued field $K$ of equicharacteristic zero
along with the three-sorted language $\mathcal{L}$ of Denef--Pas.
In this section, we remind the reader the concept of fiber
shrinking introduced in our paper~\cite[Section~6]{Now2}.

\vspace{1ex}

Let $A$ be an $\mathcal{L}$-definable subset of $K^{n}$ with
accumulation point
$$ a = (a_{1},\ldots,a_{n}) \in K^{n} $$
and $E$ an $\mathcal{L}$-definable subset of $K$ with accumulation
point $a_{1}$. We call an $\mathcal{L}$-definable family of sets
$$ \Phi = \bigcup_{t \in E} \ \{ t \} \times \Phi_{t} \subset A $$
an $\mathcal{L}$-definable $x_{1}$-fiber shrinking for the set $A$
at $a$ if
$$ \lim_{t \rightarrow a_{1}} \, \Phi_{t} = (a_{2},\ldots,a_{n}),
$$
i.e.\ for any neighbourhood $U$ of $(a_{2},\ldots,a_{n}) \in
K^{n-1}$, there is a neighbourhood $V$ of $a_{1} \in K$ such that
$\emptyset \neq \Phi_{t} \subset U$ for every $t \in V \cap E$, $t
\neq a_{1}$. When $n=1$, $A$ is itself a fiber shrinking for the
subset $A$ of $K$ at an accumulation point $a \in K$.

%\vspace{2ex}

\begin{proposition}\label{FS} (Fiber shrinking)
Every $\mathcal{L}$-definable subset $A$ of $K^{n}$ with
accumulation point $a \in K^{n}$ has, after a permutation of the
coordinates, an $\mathcal{L}$-definable $x_{1}$-fiber shrinking at
$a$.
\end{proposition}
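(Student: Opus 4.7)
The plan is to prove the proposition by induction on the ambient dimension $n$. The base case $n=1$ is tautological: by the definition recorded just above the statement, $A$ itself serves as an $x_1$-fiber shrinking at any accumulation point. For the inductive step I first note that, after permuting coordinates, one may assume $a_1$ is an accumulation point of $E := \pi_1(A \setminus \{a\})$. Indeed, if for every index $i$ the point $a_i$ were isolated in $\pi_i(A \setminus \{a\})$, then every sufficiently close point of $A$ would have to agree with $a$ in all coordinates, contradicting the accumulation hypothesis.

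I would then split into two cases depending on the fiber $A_{a_1} := \{x' \in K^{n-1} : (a_1, x') \in A\}$ at $a' = (a_2, \ldots, a_n)$. If $a'$ is an accumulation point of $A_{a_1}$, the inductive hypothesis applied to $A_{a_1}$ yields, after a suitable permutation of $\{1,\ldots,n-1\}$, an $x_1$-fiber shrinking $\Psi \subset A_{a_1}$ at $a'$. The set $\{a_1\} \times \Psi$ then lies in $A$, and swapping the first coordinate of $K^n$ with the ``shrinking coordinate'' of $\Psi$ converts it into the required $x_1$-fiber shrinking of $A$ at $a$. In the remaining case $a'$ is isolated in $A_{a_1}$, and this, combined with $a$ being an accumulation point of $A$, forces that for every $\gamma \in \Gamma$ there exist $t \in E$ with $v(t - a_1) \geq \gamma$ and $x \in A_t$ lying inside the polydisc $\{x' : v(x_i' - a_i) \geq \gamma,\ i \geq 2\}$.

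In this harder case I would introduce the definable $\Gamma \cup \{+\infty\}$-valued function $\beta$ on $E$, where $\beta(t)$ measures how close $A_t$ gets to $a'$ in the valuation metric. The central step is then to extract a definable subset $E'' \subset E$ with $a_1 \in \overline{E''}$ along which $\beta(t) \to +\infty$ as $t \to a_1$. Here I would invoke the two tools emphasized in the introduction: the structural description of definable functions of one variable near a point \cite[Proposition~5.1]{Now5}, and the Cluckers--Halupczok relative quantifier elimination for ordered abelian groups \cite{C-H}. Together they let one isolate a curve-like definable piece of $E$ on which $\beta$ grows without bound, essentially by analyzing the preimages in $\Gamma$ of cofinal subsets under $\beta$. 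Once $E''$ is in hand, the definable Skolem functions available in the Denef--Pas language furnish a selection $\phi : E'' \to K^{n-1}$ with $\phi(t) \in A_t$ whose valuation distance to $a'$ tends to infinity as $t \to a_1$; the graph $\Phi := \{(t, \phi(t)) : t \in E''\}$ is then the required $x_1$-fiber shrinking.

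The main obstacle I foresee is precisely the extraction of $E''$ in this second case: controlling the growth of the $\Gamma$-valued function $\beta$ as $t$ approaches $a_1$ is delicate when $\Gamma$ is an arbitrary ordered abelian group, and it is here that the passage from the rank one argument of \cite{Now2} genuinely requires the new Cluckers--Halupczok machinery in place of an archimedean one-variable estimate.
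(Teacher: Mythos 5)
Your overall architecture (clear away degenerate coordinates, then do the real work in the value group) is close in spirit to the paper, whose proof simply repeats the rank-one argument of \cite[Section~6]{Now2} verbatim once Lemma~\ref{line} is available: the valuation vectors of points of $A$ near $a$ form a definable subset $P\subset\Gamma^{n}$ accumulating at $(\infty,\ldots,\infty)$, relative quantifier elimination of Cluckers--Halupczok (Theorem~\ref{RQE} with Remarks~\ref{Rem1} and~\ref{Rem2}) produces an affine semiline $L$ with positive integer direction coefficients such that $P\cap L$ still accumulates at $(\infty,\ldots,\infty)$, and the fiber shrinking is then the whole definable set of points of $A$ whose valuation vector lies on $L$ (so that all coordinates tend to $a_{i}$ at linearly linked rates as $x_{1}\to a_{1}$). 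Your Case~A and the permutation argument are fine, but in Case~B the proposal has genuine gaps at exactly the hard spot. First, the selection step is illegitimate: the Denef--Pas structure of a general equicharacteristic zero Henselian field does not have definable Skolem functions in the field sort (consider fibers of the form $\{x:\ \overline{ac}(x)\in S\}$ with $\Bbbk$ algebraically closed); fiber shrinking is introduced precisely as a relaxed substitute for curve selection because definable choice is unavailable at this stage --- curve selection is only derived later, as a consequence of the closedness theorem. The paper never selects points: the fibers $\Phi_{t}$ are whole sets, and the definition only requires them to be non-empty and small.

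Second, your function $\beta$ need not exist: in an arbitrary ordered abelian group the supremum measuring ``how close $A_{t}$ gets to $a'$'' need not be attained or even defined, since $\Gamma$ has no completeness; so the plan of ``analyzing the preimages in $\Gamma$ of cofinal subsets under $\beta$'' does not get off the ground as stated, and one must instead work directly with the definable subset of $\Gamma^{n}$ of valuation vectors, which is what Lemma~\ref{line} does. Moreover, \cite[Proposition~5.1]{Now5} concerns limits of one-variable $K$-valued definable functions; it enters the proof of Theorem~\ref{clo-th} in Section~4, not the proof of fiber shrinking, and it gives no control over a $\Gamma$-valued quantity of the kind you introduce. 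What remains after these repairs --- linking, on a definable subset of $E$ cofinal at $a_{1}$, the rate at which $v(t-a_{1})\to\infty$ with the rate at which points of $A_{t}$ approach $a'$ --- is precisely the content of Lemma~\ref{line}, and your proposal asserts this step (``isolate a curve-like definable piece'') rather than proving it. So you have correctly located where the Cluckers--Halupczok machinery must act, but the key lemma that makes the whole reduction work is missing, and the surrounding scaffolding (Skolem functions, the function $\beta$) would not survive in the generality claimed.
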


In the case where the ground field $K$ is of rank one, the proof
of Proposition~\ref{FS} was given in~\cite[Section~6]{Now2}. In
the general case, it can be repeated verbatim once we demonstrate
the following result on definable subsets in the value group sort
$\Gamma$.

\begin{lemma}\label{line}
Let $\Gamma$ be an ordered abelian group and $P$ be a definable
subset of $\Gamma^{n}$. Suppose that $(\infty,\ldots,\infty)$ is
an accumulation point of $P$, i.e.\ for any $\delta \in \Gamma$
the set
$$ \{ x \in P: x_{1} > \delta, \ldots, x_{n} > \delta \} \neq \emptyset $$
is non-empty. Then there is an affine semiline $L$ passing through
a point $\gamma = (\gamma_{1},\ldots,\gamma_{n}) \in P$:
$$ L = \{ (r_{1}t + \gamma_{1},\ldots,r_{n}t + \gamma_{n}): \, t
   \in \Gamma, \ t \geq 0 \}, $$
where $r_{1},\ldots,r_{n} \in \mathbb{N}$ are positive integers,
such that $(\infty,\ldots,\infty)$ is an accumulation point of the
intersection $P \cap L$ too.
\end{lemma}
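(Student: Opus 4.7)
The plan is to prove Lemma~\ref{line} by induction on $n$, using the Cluckers--Halupczok relative quantifier elimination as the main structural tool. Relative QE tells us that definable subsets of $\Gamma^n$ are Presburger-like: finite unions of pieces cut out by linear inequalities with integer coefficients in the $\Gamma$-variables (with parameters from auxiliary sorts) together with congruence conditions modulo positive integers. This is precisely the structure that will let me produce a semiline with \emph{positive integer} direction vector.

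The base case $n=1$ comes from the following structural remark: an unbounded definable subset $P \subset \Gamma$ must, by the relative QE, agree eventually with a finite union of cosets of subgroups $N\Gamma$ for various $N \in \mathbb{N}_{>0}$ intersected with intervals extending to $+\infty$. Choosing one such coset and representative $\gamma_1 \in P$ gives a tail arithmetic progression $\gamma_1 + N\mathbb{N} \subset P$, and the semiline $L = \{\gamma_1 + tN : t \geq 0\}$ then satisfies $P \cap L \supseteq \gamma_1 + N\mathbb{N}$, which accumulates at $+\infty$.

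For the inductive step, I would project $P$ to its first $n-1$ coordinates, obtaining a definable set $P' \subset \Gamma^{n-1}$ that still has $(\infty,\ldots,\infty)$ as an accumulation point. By induction there exist $\gamma' = (\gamma_1, \ldots, \gamma_{n-1}) \in P'$ and positive integers $r_1, \ldots, r_{n-1}$ such that the set $T := \{t \in \Gamma_{\geq 0} : \gamma' + t(r_1,\ldots,r_{n-1}) \in P'\}$ is unbounded. Now I examine the definable family $Q := \bigcup_{t \in T}\{t\} \times Q_t \subset \Gamma^2$, where $Q_t$ is the nonempty fiber of $P$ over $\gamma'+t(r_1,\ldots,r_{n-1})$. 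By relative QE applied in $\Gamma^2$, after further restriction to an unbounded definable subset $T'\subseteq T$, the cell containing the relevant part of $Q$ admits a linear selector $y = (p/q)t + c$ with $p, q \in \mathbb{Z}$, $q>0$, $c \in \Gamma$, so that $(\gamma' + t(r_1,\ldots,r_{n-1}), (p/q)t+c) \in P$ for all $t \in T'$. Applying the base-case argument to $T'$ produces an arithmetic progression $t_0 + s\mathbb{N} \subset T'$ with $s \in \mathbb{N}_{>0}$, chosen divisible by $q$ so that $(p/q)s \in \mathbb{Z}$. Setting $\gamma_n := (p/q)t_0 + c$ and clearing denominators yields the desired semiline with direction $(sr_1,\ldots,sr_{n-1},(p/q)s) \in \mathbb{N}_{>0}^n$ (after replacing $p$ by $|p|$ by symmetry, since we may further require the last coordinate grows to $+\infty$).

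The main obstacle is precisely the extraction of the linear selector with \emph{integer} slope: in an arbitrary ordered abelian group definable Skolem functions need not exist, and arbitrary definable functions $\Gamma \to \Gamma$ can be quite wild. The Cluckers--Halupczok theorem is what guarantees that inside each cell the fiber data is governed by integer-coefficient linear terms together with congruences, so that after passing to a definable unbounded subset of $T$ we can in fact choose $y$ to be an affine function of $t$ with rational slope. Once this is in hand, multiplying through by the common denominator to absorb it into the direction vector is routine, and the inductive construction closes.
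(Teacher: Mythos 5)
Your base case already misstates what relative quantifier elimination gives over a general ordered abelian group. The claim that an unbounded definable $P\subset\Gamma$ eventually coincides with a finite union of cosets of subgroups $N\Gamma$ intersected with rays is a Presburger-type normal form, valid for archimedean $\Gamma$ --- which is exactly the case already settled in \cite{Now2} and not the point of the lemma here. In the Cluckers--Halupczok language the atomic conditions are taken \emph{relative to definable convex subgroups} $\Gamma_{\alpha}$ (e.g.\ $x\equiv_{m,\alpha}^{m'}y$ means $x-y\in\Gamma_{\alpha}^{m'}+m\Gamma$), and a set such as $\Delta+2\Gamma$ for a proper convex subgroup $\Delta$ with $\Delta/2\Delta$ infinite meets infinitely many cosets of $N\Gamma$ above every bound, so it is not of your asserted form near $+\infty$. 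This particular defect is repairable, but along the lines the paper actually follows: by Theorem~\ref{RQE} together with Remark~\ref{Rem2}, the subgroups $\Gamma_{\theta_{j}}$ that essentially occur are not cofinal, so above some $\beta$ only relative congruence conditions survive (as in formula~(\ref{vE})), and membership is then invariant under translation by $N\Gamma$; that, not a Presburger normal form, is what yields the $n=1$ statement and, likewise, the general case.

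The genuinely fatal gap is in the inductive step. The induction hypothesis applied to the projection $P'$ hands you \emph{some} semiline $L'$, with no control whatsoever on the fibers of $P$ above it; those fibers may be bounded, and then no unbounded definable $T'\subseteq T$ and no choice of selector can make the last coordinate tend to $+\infty$ --- the parenthetical ``we may further require the last coordinate grows to $+\infty$'' is precisely the step that has no justification (and if the selector slope is $p=0$ there is nothing to replace by $|p|$). Concretely, take $\Gamma=\mathbb{Z}$ and $P=\{(x,0):x\in 2\mathbb{Z}\}\cup\{(x,x):x\in 1+2\mathbb{Z}\}$: the point $(\infty,\infty)$ is an accumulation point of $P$, the projection is all of $\mathbb{Z}$, and the semiline $\{2t:t\geq 0\}$ is a perfectly legitimate output of the $n=1$ case, yet every fiber of $P$ over it equals $\{0\}$, so your construction stalls even though the lemma holds for $P$ (take the semiline through $(1,1)$ with direction $(2,2)$). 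A correct argument must therefore choose the direction using the defining formula of $P$ in all $n$ variables at once, not the formula of its projection; moreover your extraction of an integral-slope ``linear selector'' over an unbounded definable parameter set is itself an unproved cell-decomposition-type assertion. The paper proceeds differently and avoids both problems: it puts the formula for $P$ in family union form (Theorem~\ref{RQE}), uses Remark~\ref{Rem1} and Remark~\ref{Rem2} to reduce, near $(\infty,\ldots,\infty)$, to integral linear inequalities and relative congruences whose subgroups are not cofinal, and then moves a far-out point of $P$ along a positive integer direction divisible by the occurring moduli, exactly as in the archimedean argument of \cite{Now2}, Section~6.
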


In~\cite[Section~6]{Now2}, Lemma~\ref{line} was shown for
archimedean groups by means of quantifier elimination in the
Presburger language. But in the general case, it follows in a
similar fashion via relative quantifier elimination for ordered
abelian groups (apply Theorem~\ref{RQE} along with
Remarks~\ref{Rem1} and~\ref{Rem2}), recalled in the next section.

\section{Quantifier elimination for ordered abelian
groups}

It is well known that archimedean ordered abelian groups admit
quantifier elimination in the Presburger language. Much more
complicated are quantifier elimination results for non-archimedean
groups (especially those with infinite rank), going back as far as
Gurevich~\cite{Gur}. He established a transfer of sentences from
ordered abelian groups to so-called coloured chains (i.e.\
linearly ordered sets with additional unary predicates), enhanced
later to allow arbitrary formulas. This was done in his doctoral
dissertation "The decision problem for some algebraic theories"
(Sverdlovsk, 1968), and next also by Schmitt in his habilitation
thesis "Model theory of ordered abelian groups" (Heidelberg,
1982); see also the paper~\cite{Sch}. Such a transfer is a kind of
relative quantifier elimination, which allows
Gurevich--Schmitt~\cite{G-S} in their study of the NIP property to
lift model theoretic properties from ordered sets to ordered
abelian groups or, in other words, to transform statements on
ordered abelian groups into those on coloured chains.

\vspace{1ex}

Instead Cluckers--Halupczok~\cite{C-H} introduce a suitable
many-sorted language $\mathcal{L}_{qe}$ with main group sort
$\Gamma$ and auxiliary imaginary sorts which carry the structure
of a linearly ordered set with some additional unary predicates.
They provide quantifier elimination relative to the auxiliary
sorts, where each definable set in the group is a union of a
family of quantifier free definable sets with parameter running a
definable (with quantifiers) set of the auxiliary sorts.

\vspace{1ex}

Fortunately, sometimes it is possible to directly deduce
information about ordered abelian groups without any knowledge of
the auxiliary sorts. For instance, this may be illustrated by
their theorem on piecewise linearity of definable
functions~\cite[Corollary~1.10]{C-H} as well as by
Proposition~\ref{line} and application of quantifier elimination
in the proof of the closedness theorem in Section~4.

\vspace{1ex}

Now we briefly recall the language $\mathcal{L}_{qe}$ taking care
of points essential for our applications.

\vspace{1ex}

The main group sort $\Gamma$ is with the constant $0$, the binary
function $+$ and the unary function $-$. The collection
$\mathcal{A}$ of auxiliary sorts consists of certain imaginary
sorts:
$$ \mathcal{A} := \{ \mathcal{S}_{p}, \mathcal{T}_{p},
   \mathcal{T}^{+}_{p}: p \in \mathbb{P} \}; $$
here $\mathbb{P}$ stands for the set of prime numbers. By abuse of
notation, $\mathcal{A}$ will also denote the union of the
auxiliary sorts. In this section, we denote $\Gamma$-sort
variables by $x,y,z,\ldots$ and auxiliary sorts variables by
$\eta, \theta, \zeta, \ldots$.

\vspace{1ex}

Further, the language $\mathcal{L}_{qe}$ consists of some unary
predicates on $\mathcal{S}_{p}$, $p \in \mathbb{P}$, some binary
order relations on $\mathcal{A}$, a ternary relation
$$ x \equiv_{m,\alpha}^{m'} y \ \ \text{on} \ \
   \Gamma \times \Gamma \times \mathcal{S}_{p} \ \
   \text{for each} \ \ p \in \mathbb{P}, \ m,m' \in \mathbb{N}, $$
and finally predicates for the ternary relations
$$ x \diamond_{\alpha} y + k_{\alpha} \ \ \text{on} \ \ \Gamma \times \Gamma
   \times \mathcal{A}, $$
where $\diamond \in \{ =, <, \equiv_{m} \}$, $m \in \mathbb{N}$,
$k \in \mathbb{Z}$ and $\alpha$ is the third operand running any
of the auxiliary sorts $\mathcal{A}$.

\vspace{1ex}

We now explain the meaning of the above ternary relations, which
are defined by means of certain definable subgroup
$\Gamma_{\alpha}$ and $\Gamma_{\alpha}^{m'}$ of $\Gamma$ with
$\alpha \in \mathcal{A}$ and $m' \in \mathbb{N}$. Namely we write
$$ x \equiv_{m,\alpha}^{m'} y \ \ \text{iff} \ \  x-y \in
   \Gamma_{\alpha}^{m'} + m\Gamma. $$
Further, let $1_{\alpha}$ denote the minimal positive element of
$\Gamma/\Gamma_{\alpha}$ if $\Gamma/\Gamma_{\alpha}$ is discrete
and $1_{\alpha} :=0$ otherwise, and set $k_{\alpha} := k \cdot
1_{\alpha}$ for all $k \in \mathbb{Z}$. By definition we write
$$ x \diamond_{\alpha} y + k_{\alpha} \ \ \ \text{iff} \ \ \
   x \, (\bmod \, \Gamma_{\alpha}) \diamond y \ (\bmod \, \Gamma_{\alpha}) +
   k_{\alpha}. $$
(Thus the language $\mathcal{L}_{qe}$ incorporates the Presburger
language on all quotients $\Gamma/\Gamma_{\alpha}$.) Note also
that the ordinary predicates $<$ and $\equiv_{m}$ on $\Gamma$ are
$\Gamma$-quantifier-free definable in the language
$\mathcal{L}_{qe}$.

\vspace{1ex}

Now we can readily formulate quantifier elimination relative to
the auxiliary sorts (\cite[Theorem~1.8]{C-H}).

\begin{theorem}\label{RQE}
In the theory $T$ of ordered abelian groups, each
$\mathcal{L}_{qe}$-formula $\phi(\bar{x},\bar{\eta})$ is
equivalent to an $\mathcal{L}_{qe}$-formula
$\psi(\bar{x},\bar{\eta})$ in family union form, i.e.\
$$ \psi(\bar{x},\bar{\eta}) = \bigvee_{i=1}^{k} \; \exists \,
   \bar{\theta} \: \left[ \chi_{i}(\bar{\eta},\bar{\theta})
   \wedge \omega_{i}(\bar{x},\bar{\theta}) \right], $$
where $\bar{\theta}$ are $\mathcal{A}$-variables, the formulas
$\chi_{i}(\bar{\eta},\bar{\theta})$ live purely in the auxiliary
sorts $\mathcal{A}$, each $\omega_{i}(\bar{x},\bar{\theta})$ is a
conjunction of literals (i.e.\ atomic or negated atomic formulas)
and $T$ implies that the $\mathcal{L}_{qe}(\mathcal{A})$-formulas
$$ \{ \chi_{i}(\bar{\eta},\bar{\alpha}) \wedge
   \omega_{i}(\bar{x},\bar{\alpha}): \; i=1,\ldots,k, \
   \bar{\alpha} \in \mathcal{A} \} $$
are pairwise inconsistent.
\end{theorem}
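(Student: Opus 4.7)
The plan is to proceed by induction on the quantifier depth of $\phi$, reducing the problem to eliminating a single existential $\Gamma$-sort quantifier $\exists x$ applied to a quantifier-free $\mathcal{L}_{qe}$-formula. Existential auxiliary-sort quantifiers may simply be absorbed into the $\chi_{i}$-part of the target form, so the essential work lies in eliminating quantifiers from the group sort.

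First I would put the quantifier-free kernel into disjunctive normal form, so that $\exists x$ distributes over the disjuncts. Within each disjunct, the literals containing $x$ split into congruences of shape $x \equiv_{m,\alpha}^{m'} y + k$ (modulo $\Gamma_{\alpha}^{m'} + m\Gamma$) and order/equality literals $x \diamond_{\alpha} y + k_{\alpha}$ (modulo $\Gamma_{\alpha}$); literals not containing $x$ are pulled outside the quantifier. Only finitely many auxiliary parameters $\alpha$ occur in a given disjunct, so the finite collection of subgroups $\Gamma_{\alpha}, \Gamma_{\alpha}^{m'}$ can be controlled by introducing extra $\mathcal{A}$-variables $\bar{\theta}$ which record the relative positions of these subgroups, whether each quotient is discrete or densely ordered, and where the distinguished generators $1_{\alpha}$ sit. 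A case-split over the finitely many configurations is paid for by enlarging $\bar{\theta}$ and adding the appropriate conjuncts to $\chi_{i}$.

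Within a fixed configuration there is a minimal relevant subgroup $\Gamma_{\beta}$; passing to the quotient $\Gamma/\Gamma_{\beta}$, which is regularly ordered (discrete with generator $1_{\beta}$, or densely ordered), a Presburger-style elimination becomes available. The system of inequalities on $x$ is solvable iff every lower bound lies sufficiently close (in terms of $1_{\beta}$-steps) to every upper bound; the system of congruences is jointly solvable iff a Chinese-remainder compatibility holds among the right-hand sides; and the combined system is solvable iff at least one congruence-solution lies in the interval carved out by the inequalities. Each of these conditions is expressible by literals in the remaining $\Gamma$-variables together with conditions on $\bar{\theta}$, and so gives an equivalent quantifier-free replacement for $\exists x$.

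Finally, to convert the resulting disjunction into family union form with pairwise inconsistent pieces, I would refine the auxiliary-sort partition: listing the disjuncts as $\chi_{1}, \dots, \chi_{k}$, one replaces each $\chi_{i}$ by $\chi_{i} \wedge \neg(\chi_{1} \vee \cdots \vee \chi_{i-1})$, still a purely auxiliary formula. The main obstacle is the second step: one has to verify that every case distinction forced by the elimination — the ordering of the subgroups $\Gamma_{\alpha}$, the discrete/dense dichotomy for each quotient, the residue classes appearing after combining congruences — can be faithfully encoded in $\mathcal{S}_{p}, \mathcal{T}_{p}, \mathcal{T}_{p}^{+}$ together with the predicates $\diamond_{\alpha}$ and $\equiv_{m,\alpha}^{m'}$. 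This is precisely the point at which the concrete design of the auxiliary sorts in \cite{C-H} becomes essential, and it is where the bulk of the technical work resides.
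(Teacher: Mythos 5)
There is nothing in the paper to compare your argument with: Theorem~\ref{RQE} is not proved in this paper at all, but is quoted verbatim from Cluckers--Halupczok \cite[Theorem~1.8]{C-H}, where its proof occupies essentially the whole of a long technical article (building on the Gurevich--Schmitt transfer machinery). So the only question is whether your sketch would constitute an independent proof, and it would not: you yourself locate ``the bulk of the technical work'' in the step you do not carry out, and that step is precisely the content of the theorem.

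Concretely, the gap is in your second step. You reduce to a disjunct containing only finitely many auxiliary parameters $\alpha$ and then claim the finitely many subgroups $\Gamma_{\alpha}$, $\Gamma_{\alpha}^{m'}$ can be controlled by a finite case split encoded in $\chi_{i}$. But the whole point of relative quantifier elimination is that the relevant convex subgroups form \emph{infinite definable families} indexed by the sorts $\mathcal{S}_{p},\mathcal{T}_{p},\mathcal{T}^{+}_{p}$ (this is why non-archimedean groups of infinite rank defeat plain Presburger-style elimination), and when one eliminates $\exists x$ the auxiliary parameter witnessing the ``scale'' of the witness $x$ varies with the remaining free variables; this is exactly why the target form carries a genuine bounded quantifier $\exists\,\bar{\theta}$ over the auxiliary sorts rather than a finite disjunction of configurations. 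Your appeal to a ``minimal relevant subgroup $\Gamma_{\beta}$'' with $\Gamma/\Gamma_{\beta}$ regularly ordered, so that Presburger elimination applies, is likewise unjustified for a general ordered abelian group: such quotients need not be regularly ordered, and taming them is what the specific design of the auxiliary sorts accomplishes. Finally, even the last cosmetic step falls short of the statement: the required pairwise inconsistency is over all pairs $(i,\bar{\alpha})$, including distinct auxiliary parameters $\bar{\alpha}\neq\bar{\alpha}'$ for the \emph{same} index $i$ (uniqueness of the witnessing $\bar{\theta}$), whereas your replacement of $\chi_{i}$ by $\chi_{i}\wedge\neg(\chi_{1}\vee\cdots\vee\chi_{i-1})$ only separates distinct indices. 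For the purposes of the present paper the correct move is simply to cite \cite{C-H}, as the author does.
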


\begin{remark}\label{Rem1}
The sets definable (or, definable with parameters) in the main
group sort $\Gamma$ resemble to some extent the sets which are
definable in the Presburger language. Indeed, the atomic formulas
involved in the formulas $\omega_{i}(\bar{x},\bar{\theta})$ are of
the form
$$ t(\bar{x}) \diamond_{\theta_{j}} k_{\theta_{j}}, $$
where $t(\bar{x})$ is a $\mathbb{Z}$-linear combination
(respectively, a $\mathbb{Z}$-linear combination plus an element
of $\Gamma$) , the predicates
$$ \diamond \in \{ =, <, \equiv_{m}, \equiv_{m}^{m'} \} \ \ \text{with some}
   \ \ m,m' \in \mathbb{N}, $$
$\theta_{j}$ is one of the entries of $\bar{\theta}$ and $k \in
\mathbb{Z}$; here $k=0$ if $\diamond$ is $\equiv_{m}^{m'}$.
Clearly, equality and inequalities define polyhedra and congruence
conditions define sets which consist of entire cosets of $m\Gamma$
for finitely many $m \in \mathbb{N}$.
\end{remark}

\begin{remark}\label{Rem2}
Note also that the sets given by atomic formulas $t(\bar{x})
\diamond_{\theta_{j}} k_{\theta_{j}}$ consist of entire cosets of
the subgroups $\Gamma_{\theta_{j}}$. Therefore, the union of those
subgroups $\Gamma_{\theta_{j}}$ which essentially occur in a
formula in family union form, describing a proper subset of
$\Gamma^{n}$, is not cofinal with $\Gamma$. This observation is
often useful as, for instance, in the proofs of fiber shrinking
and Theorem~\ref{clo-th}.
\end{remark}

\section{Proof of the closedness theorem}

Generally, we shall follow the idea of the proof
from~\cite[Section~7]{Now2}. Again, the proof reduces easily to
the case $m=1$ and next, by means of fiber shrinking
(Proposition~\ref{FS}), to the case $n=1$ and $a=0 \in K$.

\vspace{1ex}

Whereas in the paper~\cite{Now2} preparation cell decomposition
(due to Pas; see \cite[Theorem~3.2]{Pa1}
and~\cite[Theorem~2.4]{Now2}) was combined with quantifier
elimination in the $\Gamma$ sort in the Presburger language, here
it is combined with relative quantifier elimination due to
Cluckers--Halupczok. In the same manner as before, we can now
assume that $B$ is a subset $F$ of a cell $C$, as explained below.
Let
$$ a(x,\xi),b(x,\xi),c(x,\xi): \, D \longrightarrow K $$
be three $\mathcal{L}$-definable functions on an
$\mathcal{L}$-definable subset $D$ of $K^{2} \times \Bbbk^{m}$ and
let $\nu \in \mathbb{N}$ is a positive integer. For each $\xi \in
\Bbbk^{m}$ set
$$ C(\xi) := \left\{ \rule{0em}{3ex} (x,y) \in K^{n}_{x} \times K_{y}:
   \ (x,\xi) \in D, \right. $$
$$ \left. \rule{0em}{3ex} v(a(x,\xi)) \lhd_{1} v((y-c(x,\xi))^{\nu}) \lhd_{2} v(b(x,\xi)), \
   \overline{ac} (y-c(x,\xi)) = \xi_{1} \right\}, $$
where $\lhd_{1},\lhd_{2}$ stand for $<, \leq$ or no condition in
any occurrence. A cell $C$ is by definition a disjoint union of
the fibres $C(\xi)$. The subset $F$ of $C$ is a union of fibers
$F(\xi)$ of the form
$$ F(\xi) := \left\{ \rule{0em}{4ex} (x,y) \in C(\xi): \ \exists \
   \bar{\theta} \ \chi(\bar{\theta}) \ \wedge\ \right. $$
$$ \bigwedge_{i \in I_{a}} \
   v(a_{i}(x,\xi)) \lhd_{1,\theta_{j_{i}}} v((y - c(x,\xi))^{\nu_{i}}), \
   \bigwedge_{i \in I_{b}} \
   v((y - c(x,\xi))^{\nu_{i}}) \lhd_{2,\theta_{j_{i}}} v(b_{i}(x,\xi)) $$
$$  \left. \wedge \ \bigwedge_{i \in I_{f}} \
    v((y - c(x,\xi))^{\nu_{i}}) \diamond_{\theta_{j_{i}}} v(f_{i}(x,\xi)) \right\}, $$
where $I_{a}$, $I_{b}$, $I_{f}$ are finite (possibly empty) sets
of indices, $a_{i}$, $b_{i}$, $f_{i}$ are $\mathcal{L}$-definable
functions, $\nu_{i},M \in \mathbb{N}$ are positive integers,
$\lhd_{1}$, $\lhd_{2}$ stand for $<$ or $\leq$, the predicates
$$ \diamond \in \{ \equiv_{M}, \neg \equiv_{M}, \equiv_{M}^{m'}, \neg \equiv_{M}^{m'} \}
   \ \ \text{with some} \ \ m' \in \mathbb{N}, $$
and $\theta_{j_{i}}$ is one of the entries of $\bar{\theta}$.

\vspace{1ex}

As before, since every $\mathcal{L}$-definable subset in the
Cartesian product $\Gamma^{n} \times \Bbbk^{m}$ of auxiliary sorts
is a finite union of the Cartesian products of definable subsets
in $\Gamma^{n}$ and in $\Bbbk^{m}$, we can assume that $B$ is one
fiber $F(\xi')$ for a parameter $\xi' \in \Bbbk^{m}$. For
simplicity, we abbreviate
$$ c(x,\xi'), a(x,\xi'), b(x,\xi'), a_{i}(x,\xi'), b_{i}(x,\xi'), f_{i}(x,\xi') $$
to
$$ c(x), a(x), b(x), a_{i}(x), b_{i}(x), f_{i}(x) $$
with $i \in I_{a}$, $i \in I_{b}$ and $i \in I_{f}$. Denote by $E
\subset K$ the common domain of these functions; then $0$ is an
accumulation point of $E$.

\vspace{1ex}

By the theorem on existence of the limit, established over
arbitrary Henselian valued fields in our
paper~\cite[Proposition~5.1]{Now5}, we can assume that the limits
$$ c(0), a(0), b(0), a_{i}(0), b_{i}(0), f_{i}(0) $$
of the functions
$$ c(x), a(x), b(x), a_{i}(x), b_{i}(x), f_{i}(x) $$
when $x \rightarrow 0\,$ exist in $R$. Moreover, there is a
neighbourhood $U$ of $0$ such that, each definable set
$$ \{ (v(x), v(f_{i}(x))): \; x \in (E \cap U) \setminus \{0 \} \}
   \subset \Gamma \times (\Gamma \cup \ \{
   \infty \}), \ \ i \in I_{f},  $$
is contained in an affine line with rational slope
\begin{equation}\label{affine}
l = \frac{p_{i}}{q} \cdot k + \beta_{i}, \ \ i \in I_{f},
\end{equation}
with $p_{i},q \in \mathbb{Z}$, $q>0$, $\beta_{i} \in \Gamma$, or
in\/ $\Gamma \times \{ \infty \}$.

\vspace{1ex}

The role of the center $c(x)$ is, of course, immaterial. We may
assume, without loss of generality, that it vanishes, $c(x) \equiv
0$, for if a point $b = (0,w) \in K^{2}$ lies in the closure of
the cell with zero center, the point $(0, w + c(0))$ lies in the
closure of the cell with center $c(x)$.

\vspace{1ex}

Observe now that If $\lhd_{1}$ occurs and $a(0)= 0$, the set
$F(\xi')$ is itself an $x$-fiber shrinking at $(0,0)$ and the
point $b=(0,0)$ is an accumulation point of $B$ lying over $a=0$,
as desired. And so is the point $b=(0,0)$ if
$\lhd_{1,\theta_{j_{i}}}$ occurs and $a_{i}(0)= 0$ for some $i \in
I_{a}$, because then the set $F(\xi')$ contains the $x$-fiber
shrinking
$$ F(\xi') \cap \{ (x,y) \in E \times K: \
   v(a_{i}(x)) \lhd_{1} v(y^{\nu_{i}}) \}. $$

\vspace{1ex}

So suppose that either only $\lhd_{2}$ occur or $\lhd_{1}$ occur
and, moreover, $a(0) \neq 0$ and $a_{i}(0) \neq 0$ for all $i \in
I_{a}$. By elimination of $K$-quantifiers, the set $v(E)$ is a
definable subset of $\Gamma$. Further, it is easy to check,
applying Theorem~\ref{RQE} ff.\ likewise as it was in
Lemma~\ref{line}, that the set $v(E)$ is given near infinity only
by finitely many congruence conditions of the form
\begin{equation}\label{vE}
  v(E) = \left\{ k \in \Gamma: \ k > \beta \ \wedge \ \exists \
  \bar{\theta} \ \, \omega(\bar{\theta}) \ \wedge \
  \bigwedge_{i=1}^{s} \ m_{i} k \, \diamond_{N,\theta_{j_{i}}} \gamma_{i}
  \right\}.
\end{equation}
where $\beta, \gamma_{i} \in \Gamma$, $m_{i},N \in \mathbb{N}$ for
$i=1,\ldots,s$, the predicates
$$ \diamond \in \{ \equiv_{N}, \neg \equiv_{N}, \equiv_{N}^{m'}, \neg \equiv_{N}^{m'}
   \} \ \ \text{with some} \ \ m' \in \mathbb{N}, $$
and $\theta_{j_{i}}$ is one of the entries of $\bar{\theta}$.
Obviously, after perhaps shrinking the neighbourhood of zero, we
may assume that
$$ v(a(x)) = v(a(0)) \ \ \text{and} \ \ v(a_{i}(x)) = v(a_{i}(0)) $$
for all $i \in I_{a}$ and $x \in E \setminus \{ 0 \}$, $v(x)
> \beta$.

\vspace{1ex}

Now, take an element $(u,w) \in F(\xi')$ with $u \in E \setminus
\{ 0 \}$, $v(u) > \beta$. In order to complete the proof, it
suffices to show that $(0,w)$ is an accumulation point of
$F(\xi')$. To this end, observe that, by equality~\ref{vE}, there
is a point $x \in E$ arbitrarily close to $0$ such that
$$ v(x) \in v(u) + q M N \cdot \Gamma. $$
By equality~\ref{affine}, we get
$$ v(f_{i}(x)) \in v(f_{i}(u)) + p_{i} M N \cdot \Gamma, \ \ \ i \in I_{f}, $$
and hence
\begin{equation}\label{vf}
   v\left( f_{i}(x) \right) \equiv_{M} v(f_{i}(u)), \ \ \ i \in I_{f}.
\end{equation}
Clearly, in the vicinity of zero we have
$$ v(y^{\nu}) \lhd_{2} v(b(x,\xi)) $$
and
$$ \bigwedge_{i \in I_{b}} \
   v(y^{\nu_{i}}) \lhd_{2,\theta_{j_{i}}} v(b_{i}(x,\xi)). $$
Therefore equality~\ref{vf} along with the definition of the fibre
$F(\xi')$ yield $(x,w) \in F(\xi')$, concluding the proof.
 \hspace*{\fill} $\Box$

\vspace{2ex}

\vspace{2ex}

\begin{small}
%\begin{sc}
Institute of Mathematics

Faculty of Mathematics and Computer Science

Jagiellonian University

%Faculty of Mathematics and Computer Science

ul.~Profesora \L{}ojasiewicza 6, 30-348 Krak\'{o}w, Poland

{\em E-mail address: nowak@im.uj.edu.pl}
%\end{sc}
\end{small}

\end{document}